\theoremstyle{plain}
\newtheorem{theorem}{Theorem}
\begin{document}
\title[Spectral gaps of the Hill--Schr\"{o}dinger operators]{Spectral gaps of the Hill--Schr\"{o}dinger operators with 
distributional potentials}

\author[V. Mikhailets]{Vladimir Mikhailets}

\address{Institute of Mathematics \\
         National Academy of Science of Ukraine \\
         3~Tereshchekivs'ka str. \\
         01601 Kyiv-4 \\
         Ukraine}

\address{National Technical University of Ukraine \\
         "Kyiv Politecknic Institute" \\
         37~Peremogy~Av. \\
         03056 Kyiv \\
         Ukraine}         
         
\email{mikhailets@imath.kiev.ua}

\author[V. Molyboga]{Volodymyr Molyboga}

\address{Institute of Mathematics \\
         National Academy of Science of Ukraine \\
         3~Tereshchekivs'ka str. \\
         01601 Kyiv-4 \\
         Ukraine}

\email{molyboga@imath.kiev.ua}


\keywords{Hill--Schr\"{o}dinger operator, singular potential, spectral gap, H\"{o}rmander space}

\subjclass[2010]{Primary 34L40; Secondary 47A10, 47A75}

\dedicatory{Dedicated to Professor V. M. Adamyan on the occasion of his 75 birthday}


\begin{abstract}
The paper studies the Hill--Schr\"{o}dinger operators with potentials in the space $H^\omega \subset H^{-1}\left(\mathbb{T}, 
\mathbb{R}\right)$. 
The main results completely describe the sequences arising as the lengths of spectral gaps of these 
operators. The space $H^\omega$ coincides with the H\"{o}rmander space $H^{\omega}_2\left(\mathbb{T}, \mathbb{R}\right)$ with 
the weight function $\omega(\sqrt{1+\xi^{2}})$ if $\omega$ belongs to Avakumovich's class $\mathrm{OR}$. 
In particular, if the functions $\omega$ are power, then these spaces coincide with the Sobolev spaces. 
The functions $\omega$ may be nonmonotonic. 
\end{abstract}

\maketitle
\section{Introduction}
Let us consider the Hill--Schr\"{o}dinger operator
\begin{equation}\label{eq_10}
  \mathrm{S}(q)u:=-u''+q(x)u,\quad x\in \mathbb{R},
\end{equation}
with 1-periodic real-valued potential
\begin{equation*}
  q(x)=\sum_{k\in \mathbb{Z}}\widehat{q}(k)e^{i k 2\pi x}\in L^{2}(\mathbb{T},\mathbb{R}),\quad
  \mathbb{T}:=\mathbb{R}/\mathbb{Z}.
\end{equation*}
This condition means that
\begin{equation*}\label{eq_11}
 \sum_{k\in \mathbb{Z}}|\widehat{q}(k)|^{2}<\infty
 \quad\text{and}\quad \widehat{q}(k)=\overline{\widehat{q}(-k)},\quad k\in \mathbb{Z}.
\end{equation*}

It is well known that the operator $\mathrm{S}(q)$ is lower semibounded and self-adjoint in the Hilbert space 
$L^{2}(\mathbb{R})$. Its spectrum is absolutely continuous and has a zone structure~\cite{ReSi4}.

Spectrum of the operator $\mathrm{S}(q)$ is completely defined by the location of the endpoints of spectral gaps 
$\{\lambda_{0}(q),\lambda_{n}^{\pm}(q)\}_{n=1}^{\infty}$, which satisfy the inequalities
\begin{equation}\label{InEq}
  -\infty<\lambda_{0}(q)<\lambda_{1}^{-}(q)\leq\lambda_{1}^{+}(q)<\lambda_{2}^{-}(q)\leq\lambda_{2}^{+}(q)<\cdots\,.
\end{equation}
Some gaps may be degenerate, then the corresponding bands merge. 
For even/odd numbers $n\in \mathbb{Z}_{+}$, the endpoints of spectral gaps 
$\{\lambda_{0}(q),\lambda_{n}^{\pm}(q)\}_{n=1}^{\infty}$ 
are eigenvalues of the periodic/semi\-periodic problems on the interval $(0,1)$.

The interiors of spectral bands (the stability zones)
\begin{equation*}
  \mathcal{B}_{0}(q):=(\lambda_{0}(q),\lambda_{1}^{-}(q)),\qquad
  \mathcal{B}_{n}(q):=(\lambda_{n}^{+}(q),\lambda_{n+1}^{-}(q)),\quad n\in
  \mathbb{N},
\end{equation*}
together with the collapsed gaps,
\begin{equation*}
 \lambda=\lambda_{n_{i}}^{-}=\lambda_{n_{i}}^{+},
\end{equation*}
are characterized as the set of those $\lambda\in \mathbb{R}$, for which all solutions of the equation
\begin{equation}\label{MnEq}
  -u''+q(x)u=\lambda u, \qquad x\in \mathbb{R},
\end{equation}
are bounded on $\mathbb{R}$. 
The open spectral gaps (instability zones)
\begin{equation*}
  \mathcal{G}_{0}(q):=(-\infty,\lambda_{0}(q)),\qquad
  \mathcal{G}_{n}(q):=(\lambda_{n}^{-}(q),\lambda_{n}^{+}(q))\neq\emptyset,\quad n\in
  \mathbb{N},
\end{equation*}
form a set of those $\lambda\in \mathbb{R}$ for which any nontrivial solution of the equation \eqref{MnEq} 
is unbounded on $\mathbb{R}$.

We study the behaviour of the lengths of spectral gaps,
\begin{equation*}
  \gamma_{q}(n):=\lambda_{n}^{+}(q)-\lambda_{n}^{-}(q),\quad n\in \mathbb{N},
\end{equation*}
of the operator $\mathrm{S}(q)$ in terms of behaviour of the Fourier coefficients $\{\widehat{q}(n)\}_{n\in \mathbb{N}}$ of 
the potential $q$ with respect to test sequence spaces, that is in terms of potential regularity.

Hochstadt \cite{Hchs1, Hchs2}, Marchenko and Ostrovskii \cite{MrOs}, McKean and Trubowitz \cite{McKTr, Trb} proved that the 
potential $q$ is an infinitely differentiable function if and only if the lengths of spectral gaps 
$\{\gamma_{q}(n)\}_{n=1}^{\infty}$ decrease faster than an arbitrary power of~$1/n$:
\begin{equation*}
  q\in C^{\infty}(\mathbb{T})\Leftrightarrow
  \gamma_{q}(n)=O(n^{-k}),\; n\rightarrow\infty,\quad k\in \mathbb{Z}_{+}.
\end{equation*}

However, the scale of spaces $\left\{C^{k}(\mathbb{T})\right\}_{k \in \mathbb{N}}$ turned out unsuitable to 
obtain precise quantitative results. Marchenko and Ostrovskii \cite{MrOs} (see also \cite{Mrch, Lvt1987}) found that
\begin{equation}\label{eq_14}
  q\in H^{s}(\mathbb{T})\Leftrightarrow \{\gamma_{q}(n)\}_{n\in \mathbb{N}}\in h^{s}(\mathbb{N}),\qquad s\in  \mathbb{Z}_{+}.
\end{equation}

The Sobolev spaces $H^{s}(\mathbb{T})$, $s\in \mathbb{R}$, of 1-periodic functions/generalized functions 
may also be defined by means of their Fourier coefficients
\begin{equation*}\label{eq_16}
   H^{s}(\mathbb{T})  =\left\{f=\sum_{k\in \mathbb{Z}}\widehat{f}\,(k)e^{i k2\pi x}\in 
\mathfrak{D}\left(\mathbb{T}\right)\left|\;  \lVert f\rVert_{H^{s}(\mathbb{T})}^{2}:=\sum_{k\in 
\mathbb{Z}}(1+|k|)^{2s}|\widehat{f}(k)|^{2}<\infty\right.\right\}.
\end{equation*}
Here by $\mathfrak{D}(\mathbb{T})$ we denote the space of 1-periodic generalized functions on~$\mathbb{T}$.

We define the weighted sequence spaces $h^{s}(\mathbb{N})$, $s\in \mathbb{R}$, 
in the following way:
\begin{equation*}\label{eq_18}
  h^{s}(\mathbb{N}):= \left\{a=\{a(k)\}_{k\in \mathbb{N}}\left|\; \lVert a\rVert_{h^{s}(\mathbb{N})}^{2}:=\sum_{k\in 
\mathbb{N}}(1+|k|)^{2s}|a(k)|^{2}<\infty\right.\right\}.
\end{equation*}

Marchenko--Ostrovskii theorem \eqref{eq_14} can be extended to a more general scale of H\"{o}r\-man\-der spaces 
$\{H^{\omega}(\mathbb{T})\}_{\omega}$ \cite{DjMt2, DjMt3, PschMA2011}, 
where $\omega=\{\omega(k)\}_{k\in \mathbb{Z}}$ is a weighted sequence. 
Recall that a sequence  $a=\{a(k)\}_{k\in \mathbb{Z}}$ is called a weight or weighted sequence 
if it is positive and even, i. e., $a(k)\geq 0$ и $a(-k)=a(k)$ for $k\in \mathbb{Z}_{+}$.

However, complete description of the sequences that form lengths of the gaps with potentials from the given functional 
class, in particular the H\"{o}rmander space or the Sobolev space, remained an open question. This paper deals with this 
issue in more general situation of distributional potentials.

\section{Main results}
Let us start with necessary notations. 
The spaces $H^{\omega}(\mathbb{T})$ and $h^{\omega}(\mathbb{N})$ are defined similarly to the spaces 
$H^{s}(\mathbb{T})$ and $h^{s}(\mathbb{N})$:
\begin{align*}\label{eq_20}
   H^{\omega}(\mathbb{T}) & :=\left\{f=\sum_{k\in \mathbb{Z}}\widehat{f}\,(k)e^{i k2\pi x}\in 
   \mathfrak{D}\left(\mathbb{T}\right)\left|\; \lVert f\rVert_{H^{\omega}(\mathbb{T})}^{2} :=\sum_{k\in 
   \mathbb{Z}}\omega^{2}(k)|\widehat{f}(k)|^{2}<\infty\right.\right\}, \\
    h^{\omega}(\mathbb{N}) & := \left\{a=\{a(k)\}_{k\in \mathbb{N}}\left|\; \lVert 
a\rVert_{h^{\omega}(\mathbb{N})}^{2}:=\sum_{k\in \mathbb{N}}\omega^{2}(k)|a(k)|^{2}<\infty\right.\right\}.
\end{align*}

We say that the weighted sequence $\omega=\{\omega(k)\}_{k\in \mathbb{Z}}$ 
belongs to the class $\mathrm{I}_{0}$, 
if it satisfies the following condition:
\begin{equation*}\label{eq_22}
 |k|^{s}\ll \omega(k)\ll |k|^{1+s},\qquad s\in [0,\infty).
\end{equation*}
The notation
\begin{equation*}\label{eq_24}
  b(k)\ll a(k)\ll c(k),\qquad k\in \mathbb{N},
\end{equation*}
means that there are positive constants $C_{1}$ и $C_{2}$ such that the following inequalities hold:
\begin{equation*}\label{eq_26}
  C_{1}b(k)\leq a(k)\leq C_{2}c(k),\qquad k\in \mathbb{N}.
\end{equation*}
We say that the weighted sequence $\omega=\{\omega(k)\}_{k\in \mathbb{Z}}$ 
belongs to the class  $\mathrm{M}_{0}$, 
if it satisfies the following conditions:
\begin{align*}\label{eq_28}
 \mathtt{(i)}\hspace{5pt} & \omega(k)\uparrow\infty,\; k\in \mathbb{N};\hspace{5pt}\text{(monotonicity)}  \hspace{200pt} \\
\mathtt{(ii)}\hspace{5pt} & \omega(k+m)\leq \omega(k)\omega(m),\quad k,m\in \mathbb{N};\hspace{5pt}\text{(submultiplicity)} \\
\mathtt{(iii)}\hspace{5pt} & \frac{\log\omega(k)}{k}\downarrow 0,\quad 
k\rightarrow\infty,\hspace{5pt}\text{(subexponentiality)}.
\end{align*}

Suppose that a weighted sequence $\omega=\{\omega(k)\}_{k\in \mathbb{Z}}$ belongs either to class 
$\mathrm{I}_{0}$ or to the class $\mathrm{M}_{0}$. Then 
\begin{equation}\label{eq_30}
 q\in H^{\omega}(\mathbb{T})\Leftrightarrow \{\gamma_{q}(n)\}_{n\in \mathbb{N}}\in  h^{\omega}(\mathbb{N}).
\end{equation} 
The statement \eqref{eq_30} for the case $\omega\in \mathrm{I}_{0}$ 
was proved by the authors \cite{MiMlOTAA2012}, 
and the case $\mathrm{M}_{0}$ was closely studied in \cite{DjMt2, PschMA2011}.

The statement \eqref{eq_30} may be strengthened. 
It is well-known that the sequence of lengths of spectral gaps 
$\{\gamma_{q}(n)\}_{n\in \mathbb{N}}$ of the Hill--Schr\"{o}dinger operator $\mathrm{S}(q)$ 
with an $L^{2}(\mathbb{T})$-potential $q$ 
belongs to the space 
\begin{equation*}\label{eq_32}
  h_{+}^{0}(\mathbb{N}):= 
   \left\{a=\{a(k)\}_{k\in \mathbb{N}} \in l^{2}(\mathbb{N})\left|\; a(k)\geq 0,\; k\in 
\mathbb{N}\right.\right\}.
\end{equation*} 
Let us consider the map
\begin{equation*}\label{eq_34}
  \gamma: L^{2}(\mathbb{T})\ni q\mapsto \{\gamma_{q}(n)\}_{n\in \mathbb{N}} \in h_{+}^{0}(\mathbb{N}).
\end{equation*}
Then due to Garnett and Trubowitz \cite{GrTr1984, GrTr1987},
\begin{equation*}
 \gamma\left(L^{2}(\mathbb{T})\right) = h_{+}^{0}(\mathbb{N}).
\end{equation*}

We introduce following notations:
\begin{equation*}
  h_{+}^{\omega}(\mathbb{N}):= \left\{a=\{a(k)\}_{k\in \mathbb{N}}\in h^{\omega}(\mathbb{N})\left|\; a(k)\geq 0,\; 
k\in\mathbb{N}\right.\right\}.
\end{equation*}
\begin{theorem}\label{th_10}
Suppose that $q\in L^{2}(\mathbb{T})$ and that either $\omega\in\mathrm{I}_{0}$ or 
$\omega\in \mathrm{M}_{0}$. 
Then the map
\begin{equation*}
  \gamma: L^{2}(\mathbb{T})\ni q\mapsto \{\gamma_{q}(n)\}_{n\in \mathbb{N}} \in h_{+}^{0}(\mathbb{N})
\end{equation*}
satisfies the relations
\begin{align*}
 \mathtt{(i)}&\quad \gamma\left(H^{\omega}(\mathbb{T})\right) = h_{+}^{\omega}(\mathbb{N}),\\ 
 \mathtt{(ii)}&\quad \gamma^{-1}\left(h_{+}^{\omega}(\mathbb{N})\right) = H^{\omega}(\mathbb{T}). 
 \end{align*}
\end{theorem}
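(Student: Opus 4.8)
The plan is to deduce both relations by combining the already-established equivalence \eqref{eq_30} with the Garnett--Trubowitz surjectivity $\gamma\left(L^{2}(\mathbb{T})\right)=h_{+}^{0}(\mathbb{N})$, the only genuinely new ingredient being the bookkeeping that transfers surjectivity from the full cone $h_{+}^{0}(\mathbb{N})$ to the weighted cone $h_{+}^{\omega}(\mathbb{N})$. First I would record two elementary embeddings that hold in both admissible classes. Since a sequence in $\mathrm{I}_{0}$ satisfies $\omega(k)\gg|k|^{s}$ with $s\geq 0$, and a sequence in $\mathrm{M}_{0}$ is increasing, in either case $\omega$ is bounded below by a positive constant; hence $H^{\omega}(\mathbb{T})\subseteq L^{2}(\mathbb{T})$ and $h^{\omega}(\mathbb{N})\subseteq h^{0}(\mathbb{N})=l^{2}(\mathbb{N})$, so that $\gamma$ is defined on all of $H^{\omega}(\mathbb{T})$ and, conversely, every element of $h_{+}^{\omega}(\mathbb{N})$ lies in $h_{+}^{0}(\mathbb{N})$. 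I would also note once and for all that gap lengths are nonnegative, so that $\gamma(q)\in h_{+}^{\omega}(\mathbb{N})$ is equivalent to $\gamma(q)\in h^{\omega}(\mathbb{N})$ for every $q$.

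For part $\mathtt{(ii)}$ I would argue directly. Using the nonnegativity just noted,
\begin{equation*}
  \gamma^{-1}\left(h_{+}^{\omega}(\mathbb{N})\right)=\left\{q\in L^{2}(\mathbb{T}):\gamma(q)\in h^{\omega}(\mathbb{N})\right\},
\end{equation*}
and by the equivalence \eqref{eq_30} the right-hand set is exactly $\left\{q\in L^{2}(\mathbb{T}):q\in H^{\omega}(\mathbb{T})\right\}$, which equals $H^{\omega}(\mathbb{T})$ because of the embedding $H^{\omega}(\mathbb{T})\subseteq L^{2}(\mathbb{T})$. This settles $\mathtt{(ii)}$.

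For part $\mathtt{(i)}$ I would prove the two inclusions separately. The inclusion $\gamma\left(H^{\omega}(\mathbb{T})\right)\subseteq h_{+}^{\omega}(\mathbb{N})$ is immediate: if $q\in H^{\omega}(\mathbb{T})$ then $\gamma(q)\in h^{\omega}(\mathbb{N})$ by \eqref{eq_30}, and $\gamma(q)$ is nonnegative, hence $\gamma(q)\in h_{+}^{\omega}(\mathbb{N})$. For the reverse inclusion I would start from an arbitrary $a\in h_{+}^{\omega}(\mathbb{N})$; by the embedding above $a\in h_{+}^{0}(\mathbb{N})$, so the Garnett--Trubowitz theorem supplies some $q\in L^{2}(\mathbb{T})$ with $\gamma(q)=a$. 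Since $a\in h^{\omega}(\mathbb{N})$, the equivalence \eqref{eq_30} forces $q\in H^{\omega}(\mathbb{T})$, whence $a=\gamma(q)\in\gamma\left(H^{\omega}(\mathbb{T})\right)$. This proves $h_{+}^{\omega}(\mathbb{N})\subseteq\gamma\left(H^{\omega}(\mathbb{T})\right)$ and completes $\mathtt{(i)}$.

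The conceptual heart of the argument therefore lies entirely in the two imported results, and the only obstacle I anticipate is a purely formal one: verifying that the lower bound on $\omega$ holds uniformly in both classes $\mathrm{I}_{0}$ and $\mathrm{M}_{0}$ (including the treatment of the $k=0$ mode, which does not influence the gaps but must be controlled for the space inclusion), and checking that the nonnegative-cone structure is preserved under $\gamma$ and under the weighted norms so that surjectivity really descends from $h_{+}^{0}(\mathbb{N})$ to $h_{+}^{\omega}(\mathbb{N})$. No new analytic estimate on the spectral gaps is needed beyond \eqref{eq_30}.
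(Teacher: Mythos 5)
Your proof is correct and follows essentially the same route as the paper: both arguments combine the equivalence \eqref{eq_30} with the Garnett--Trubowitz surjectivity $\gamma\left(L^{2}(\mathbb{T})\right)=h_{+}^{0}(\mathbb{N})$, deriving the forward inclusions from \eqref{eq_30} and the reverse ones by lifting a sequence in $h_{+}^{\omega}(\mathbb{N})\subseteq h_{+}^{0}(\mathbb{N})$ to an $L^{2}$-potential and upgrading its regularity via \eqref{eq_30}. The only (immaterial) differences are that you treat the classes $\mathrm{I}_{0}$ and $\mathrm{M}_{0}$ uniformly, whereas the paper handles $\mathrm{I}_{0}$ by citation and writes out only $\mathrm{M}_{0}$, and you obtain (ii) by a direct preimage computation rather than by two separate inclusions.
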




Now let us consider the Hill--Schr\"{o}dinger operator $\mathrm{S}(q)$ 
with a 1-periodic real-valued distribution potential $q$ that belongs to the negative Sobolev space:
\begin{equation}\label{eq_MCP}
  q=\sum_{k\in \mathbb{Z}}\widehat{q}(k)e^{i k 2\pi x}\in H^{-1}(\mathbb{T}).
\end{equation}
All real-valued pseudo-functions, measures, pseudo-measures and some more singular distributions on the circle 
satisfy this condition. 
For more detailed discussion of operators with strongly singular potentials see \cite{EcGsNcTs2013} and 
references therein. 

Under the assumption \eqref{eq_MCP} the operator \eqref{eq_10} may be well defined in the complex Hilbert space 
$L^{2}(\mathbb{R})$ in the following basic ways:
\begin{itemize}
  \item as form-sum operator;
  \item as quasi-differential operators;
  \item as limit of operators with smooth 1-periodic potentials in the norm resolvent sense.
\end{itemize}
Equivalence of all these definitions was proved in the paper \cite{MiMl6}, 
more general case was treated in \cite{MiMlmfat2013n1}.

The Hill--Schr\"{o}dinger operator $\mathrm{S}(q)$ with strongly singular potential $q$ is lower semibounded and 
self-adjoint, its spectrum is absolutely continuous and has a band and gap structure as in the classical case 
\cite{HrMk2001, Kr2003, MiMl6, DjMt4, MiSb}. 
The endpoints of spectral gaps satisfy the inequalities \eqref{InEq}. 
For even/odd numbers $n\in \mathbb{Z}_{+}$ they are eigenvalues of the periodic/semiperiodic problems 
on the interval $[0,1]$ \cite[Theorem~C]{MiMl6}.

We say that the weighted sequence $\omega=\{\omega(k)\}_{k\in \mathbb{Z}}$ belongs to $\mathrm{I}_{-1}$, 
if it satisfies the following conditions:
\begin{align*}\label{eq_50}
 \mathtt{(i)}\hspace{5pt} & \omega(k)=(1+|k|)^{-1}, & & \quad s=1, \\
 \mathtt{(ii)}\hspace{5pt} & |k|^{s}\ll \omega(k)\ll |k|^{1+2s-\delta}\quad \forall\delta>0, & & \quad s\in (-1,0), \\
 \mathtt{(iii)}\hspace{5pt} & |k|^{s}\ll \omega(k)\ll |k|^{1+s}, & & \quad s\in [0,\infty). \hspace{140pt}
\end{align*}

We say that the weighted sequence $\omega=\{\omega(k)\}_{k\in \mathbb{Z}}$ belongs to $\mathrm{M}_{-1}$, 
if it can be represented as:
\begin{equation*}\label{eq_52}
 \omega(k)=\dfrac{\omega^{\ast}(k)}{1+|k|},\quad k\in \mathbb{Z},\qquad \omega^{\ast}=\{\omega^{\ast}(k)\}_{k\in 
\mathbb{Z}}\in \mathrm{M}_{0}. \hspace{150pt}
\end{equation*}

Suppose that a weighted sequence $\omega=\{\omega(k)\}_{k\in \mathbb{Z}}$ belongs either to the class 
$\mathrm{I}_{-1}$, or to the class $\mathrm{M}_{-1}$, then 
\begin{equation}\label{eq_54}
 q\in H^{\omega}(\mathbb{T})\Leftrightarrow \{\gamma_{q}(n)\}_{n\in \mathbb{N}}\in  h^{\omega}(\mathbb{N}).
\end{equation} 
The statement \eqref{eq_54} for the case $\omega\in \mathrm{I}_{-1}$ is proved below 
(in a weaker form, this assertion was proved earlier by the authors \cite{MiMlmfat2011n3}), 
also for the case $\omega\in\mathrm{M}_{-1}$ the statement\eqref{eq_54} 
was proved in~\cite{DjMt3}. 
Note that $\mathrm{I}_{0}$ and $\mathrm{M}_{0}$, as well as $\mathrm{I}_{-1}$ and $\mathrm{M}_{-1}$,
intersect, but do not cover each other.

Let us consider the map $\gamma: q\mapsto\{\gamma_{q}(n)\}_{n\in \mathbb{N}}$. 
Then, according to Korotyaev \cite[Theorem~1.1]{Kr2003}, map $\gamma$ maps $H^{-1}(\mathbb{T})$ onto 
$h_{+}^{-1}(\mathbb{N})$,
\begin{equation}\label{eq_56}
 \gamma(H^{-1}(\mathbb{T}))=h_{+}^{-1}(\mathbb{N}).
\end{equation} 

\begin{theorem}\label{th_12}
Suppose that $q\in H^{-1}(\mathbb{T})$ and that either $\omega\in\mathrm{I}_{-1}$ or $\omega\in \mathrm{M}_{-1}$. 
Then the map 
\begin{equation*}
  \gamma: H^{-1}(\mathbb{T})\ni q\mapsto \{\gamma_{q}(n)\}_{n\in \mathbb{N}} \in h_{+}^{-1}(\mathbb{N})
\end{equation*}
satisfies following equalities:
\begin{align*}
 \mathtt{(i)}\quad &\gamma\left(H^{\omega}(\mathbb{T})\right) = h_{+}^{\omega}(\mathbb{N}), \hspace{25pt}\\
 \mathtt{(ii)}\quad &\gamma^{-1}\left(h_{+}^{\omega}(\mathbb{N})\right) = H^{\omega}(\mathbb{T}). \hspace{110pt}
\end{align*}
\end{theorem}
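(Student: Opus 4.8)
The plan is to deduce both equalities from two ingredients already in hand: Korotyaev's inverse spectral surjection \eqref{eq_56}, $\gamma(H^{-1}(\mathbb{T}))=h_+^{-1}(\mathbb{N})$, and the two-sided regularity equivalence \eqref{eq_54}, $q\in H^{\omega}(\mathbb{T})\Leftrightarrow\{\gamma_q(n)\}\in h^{\omega}(\mathbb{N})$. The only preliminary I need is the chain of embeddings that keeps everything inside the domain of $\gamma$: for every admissible weight one has $(1+|k|)^{-1}\ll\omega(k)$, whence $H^{\omega}(\mathbb{T})\subseteq H^{-1}(\mathbb{T})$ and $h^{\omega}(\mathbb{N})\subseteq h^{-1}(\mathbb{N})$. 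This is immediate from the defining conditions: for $\omega\in\mathrm{I}_{-1}$ one checks it case by case ($s=1$ gives equality, $s\in(-1,0)$ uses $|k|^{-1}\ll|k|^{s}$, and $s\geq0$ uses $\omega(k)\gg1$), while for $\omega\in\mathrm{M}_{-1}$ it follows from $\omega^{\ast}(k)\geq\omega^{\ast}(1)>0$. I also record the trivial but repeatedly used remark that every $\gamma_q(n)=\lambda_n^{+}(q)-\lambda_n^{-}(q)$ is nonnegative, so membership in $h^{\omega}(\mathbb{N})$ coincides with membership in $h_+^{\omega}(\mathbb{N})$.

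Granting these, part \texttt{(ii)} is essentially a reformulation of \eqref{eq_54}. By definition $\gamma^{-1}(h_+^{\omega}(\mathbb{N}))=\{q\in H^{-1}(\mathbb{T}):\gamma(q)\in h_+^{\omega}(\mathbb{N})\}$; since the gap lengths are nonnegative, $\gamma(q)\in h_+^{\omega}(\mathbb{N})$ holds iff $\gamma(q)\in h^{\omega}(\mathbb{N})$, and by \eqref{eq_54} the latter holds iff $q\in H^{\omega}(\mathbb{T})$. Combined with $H^{\omega}(\mathbb{T})\subseteq H^{-1}(\mathbb{T})$ this yields $\gamma^{-1}(h_+^{\omega}(\mathbb{N}))=H^{\omega}(\mathbb{T})$.

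For part \texttt{(i)} I would prove the two inclusions separately. The inclusion $\gamma(H^{\omega}(\mathbb{T}))\subseteq h_+^{\omega}(\mathbb{N})$ is the forward direction of \eqref{eq_54} together with nonnegativity of the gaps. For the reverse inclusion $h_+^{\omega}(\mathbb{N})\subseteq\gamma(H^{\omega}(\mathbb{T}))$ the idea is a regularity bootstrap off Korotyaev's theorem: given $a\in h_+^{\omega}(\mathbb{N})\subseteq h_+^{-1}(\mathbb{N})$, \eqref{eq_56} produces a real-valued potential $q\in H^{-1}(\mathbb{T})$ with $\gamma(q)=a$; since $a\in h^{\omega}(\mathbb{N})$, the backward direction of \eqref{eq_54} upgrades the regularity to $q\in H^{\omega}(\mathbb{T})$, so $a=\gamma(q)\in\gamma(H^{\omega}(\mathbb{T}))$. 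Thus the surjection onto $h_+^{-1}(\mathbb{N})$ and the regularity equivalence \eqref{eq_54} combine to pin the image down to exactly $h_+^{\omega}(\mathbb{N})$.

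The genuine obstacle is not in this short combination but in the ingredient \eqref{eq_54} itself for $\omega\in\mathrm{I}_{-1}$, which is the Marchenko--Ostrovskii type two-sided estimate in the regime of negative smoothness (the case $\omega\in\mathrm{M}_{-1}$ being already available from \cite{DjMt3}). Here I would pass to the quasi-differential/first-order-system realisation of $\mathrm{S}(q)$ valid for $H^{-1}$-potentials, reduce the gap lengths to the Fourier data of $q$ through the Hill discriminant, and establish comparability of the weighted norms $\sum_k\omega^2(k)\gamma_q(k)^2$ and $\sum_k\omega^2(k)|\widehat{q}(k)|^2$ up to a controllable remainder. The hard part will be bounding that remainder uniformly over the possibly nonmonotone weights of $\mathrm{I}_{-1}$, so that the deviation of $\gamma_q(n)$ from its leading Fourier term is absorbed by $\omega$; this is exactly what the two-sided bounds $|k|^{s}\ll\omega(k)\ll|k|^{1+2s-\delta}$ defining $\mathrm{I}_{-1}$ are tailored to ensure.
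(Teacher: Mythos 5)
Your deduction of equalities \texttt{(i)} and \texttt{(ii)} coincides with the paper's own proof of Theorem~\ref{th_12}: the forward inclusions are the forward direction of \eqref{eq_54} plus nonnegativity of the gap lengths, and the reverse inclusion in \texttt{(i)} is the identical bootstrap --- embed $h_{+}^{\omega}(\mathbb{N})$ into $h_{+}^{-1}(\mathbb{N})$, invoke Korotyaev's surjectivity \eqref{eq_56} to produce $q\in H^{-1}(\mathbb{T})$ with $\gamma(q)=a$, then upgrade to $q\in H^{\omega}(\mathbb{T})$ by the backward direction of \eqref{eq_54}; part \texttt{(ii)} is the same set-theoretic reformulation. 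Your explicit case-by-case verification of the embeddings $H^{\omega}(\mathbb{T})\subseteq H^{-1}(\mathbb{T})$ and $h^{\omega}(\mathbb{N})\subseteq h^{-1}(\mathbb{N})$ is a point the paper uses but leaves implicit, so this much of your write-up is correct and essentially verbatim the paper's argument.

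Where you part ways with the paper is in the ingredient \eqref{eq_54} for $\omega\in\mathrm{I}_{-1}$, which you rightly identify as the substantive content, but for which you propose a from-scratch analysis (quasi-differential realisation, Hill discriminant, two-sided weighted-norm comparability with a remainder controlled uniformly over nonmonotone weights). The paper does none of this: its proof of \eqref{eq_54} never touches the discriminant, and the weight $\omega$ never enters any hard estimate. Instead it sandwiches $\omega$ between power weights: if $q\in H^{\omega}(\mathbb{T})$, the lower bound $\omega(k)\gg|k|^{s}$ and the embeddings \eqref{eq_54.14} give $q\in H^{s}(\mathbb{T})$; then the already-known Sobolev-scale equivalence \eqref{eq_54.10} (Korotyaev \cite{Kr2003} for $s=-1$, completed in \cite{DjMt3}) and the known asymptotics \eqref{eq_54.12.0}--\eqref{eq_54.12.2}, $\gamma_{q}(n)=2|\widehat{q}(n)|+h^{1+2s-\delta}(n)$ etc.\ from \cite{Kr2003, MiMlmfat2009n1}, apply; finally the upper bound $\omega(k)\ll|k|^{1+2s-\delta}$ absorbs the remainder term into $h^{\omega}(\mathbb{N})$, so that $\{\gamma_{q}(n)\}\in h^{\omega}(\mathbb{N})$ if and only if $\{2|\widehat{q}(n)|\}\in h^{\omega}(\mathbb{N})$, i.e.\ if and only if $q\in H^{\omega}(\mathbb{T})$, with the converse direction run the same way. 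In other words, the two-sided bounds defining $\mathrm{I}_{-1}$ are exploited only through embeddings to transfer between the $\omega$-scale and the power scale --- not to control a new remainder estimate. Your proposed route would amount to re-proving the cited asymptotics, and since you leave it as a plan (``the hard part will be bounding that remainder\dots''), it is the one genuine gap in your argument as written; the paper's interpolation-style shortcut closes it with no new hard analysis.
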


\section{The Proofs}
\begin{proof}[Proof of Theorem~\ref{th_10}]
Due to Garnett and Trubowitz~\cite{GrTr1984, GrTr1987} it occurs that for any sequence $\{\gamma(n)\}_{n\in \mathbb{N}}\in 
h_{+}^{0}(\mathbb{N})$ we can place the open intervals $I_{n}$ of the lengths $\gamma(n)$ (to the length 0 corresponds 
point) on the positive semi-axis $(0,\infty)$ in a such single way that there exists a potential $q\in L^{2}(\mathbb{T})$ for 
which the sequence $\{\gamma(n)\}_{n\in \mathbb{N}}$ is a sequence of the lengths of spectral gaps of the 
Hill--Schr\"{o}dinger operator $S(q)$, i.e., the map $\gamma$ maps the space $L^{2}(\mathbb{T})$ \textit{onto} the sequence 
space $h_{+}^{0}(\mathbb{N})$:
\begin{equation}\label{MpEq}
  \gamma(L^{2}(\mathbb{T}))=h_{+}^{0}(\mathbb{N}).
\end{equation}
And, as a consequence, we also have
\begin{equation}\label{MpEqIn}
  \gamma^{-1}\left(h_{+}^{0}(\mathbb{N})\right)=L^{2}(\mathbb{T}).
\end{equation}

The case $\omega\in\mathrm{I}_{0}$ was investigated by the authors in \cite{MiMlOTAA2012}. 

Let $\omega\in \mathrm{M}_{0}$. From statement \eqref{eq_30} we get
\begin{equation}\label{eq_44}
  \gamma\left(H^{\omega}(\mathbb{T})\right) \subset h_{+}^{\omega}(\mathbb{N}).
\end{equation}
To establish the equality (i) of Theorem \ref{th_10} it is necessary to prove the inverse inclusion 
in formula \eqref{eq_44}. 
So, let $\{\gamma(n)\}_{n\in \mathbb{N}}$ be an arbitrary sequence in the space $h_{+}^{\omega}(\mathbb{N})$. 
Then $\{\gamma(n)\}_{n\in \mathbb{N}}\in h_{+}^{0}(\mathbb{N})$. 
Due to \eqref{MpEq} a potential $q\in L^{2}(\mathbb{T})$ exists, 
such that the sequence $\{\gamma(n)\}_{n\in \mathbb{N}}\in h_{+}^{0}(\mathbb{N})$ 
is its sequence of the lengths of spectral gaps. 
Since by assumption $\{\gamma(n)\}_{n\in \mathbb{N}}\in h_{+}^{\omega}(\mathbb{N})$ due to \eqref{eq_30} 
we conclude that $q\in H^{\omega}(\mathbb{T})$ and as a consequence 
$\{\gamma(n)\}_{n\in \mathbb{N}}\in \gamma\left(H^{\omega}(\mathbb{T})\right)$. 
Therefore the inclusion
\begin{equation}\label{eq_46}
  \gamma\left(H^{\omega}(\mathbb{T})\right) \supset h_{+}^{\omega}(\mathbb{N})
\end{equation}
holds.

Inclusions \eqref{eq_44} and \eqref{eq_46} give the equality (i).

Now, let us prove the equality (ii) of Theorem \ref{th_10}. Let $q$ be an arbitrary function in the space 
$H^{\omega}(\mathbb{T})$. Then, due to statement \eqref{eq_30}, we have
$\gamma_{q}=\{\gamma_{q}(n)\}_{n\in \mathbb{N}}\in h_{+}^{\omega}(\mathbb{N})$ 
and as a consequence $q\in \gamma^{-1}\left(h_{+}^{\omega}(\mathbb{N})\right)$. 
Therefore
\begin{equation}\label{eq_48}
  \gamma^{-1}\left(h_{+}^{\omega}(\mathbb{N})\right) \supset H^{\omega}(\mathbb{T}).
\end{equation}

Conversely, let $\{\gamma(n)\}_{n\in \mathbb{N}}$ be an arbitrary sequence from the space $h_{+}^{\omega}(\mathbb{N})$. Then 
due to \eqref{MpEqIn} we have $\gamma^{-1}\left(\{\gamma(n)\}_{n\in \mathbb{N}}\right)\subset L^{2}(\mathbb{T})$. Taking 
into account \eqref{eq_30} we conclude that $\gamma^{-1}\left(\{\gamma(n)\}_{n\in \mathbb{N}}\right)\subset 
H^{\omega}(\mathbb{T})$, that is 
\begin{equation}\label{eq_50}
  \gamma^{-1}\left(h_{+}^{\omega}(\mathbb{N})\right) \subset H^{\omega}(\mathbb{T}).
\end{equation}

Inclusions \eqref{eq_48} and \eqref{eq_50} give the equality (ii) of Theorem \ref{th_10}.

The proof of Theorem~\ref{th_10} is complete.
\end{proof}

\begin{proof}[Proof of Formula~\eqref{eq_54}]
Notice that for the case $\omega(k)=(1+|k|)^{s}$, $s\in [-1,\infty)$, 
the relation~\eqref{eq_54} has the form
\begin{equation}\label{eq_54.10}
  q\in H^{s}(\mathbb{T})\Leftrightarrow \{\gamma_{q}(n)\}_{n\in \mathbb{N}}\in  h^{s}(\mathbb{N}), \qquad s\in [-1,\infty).
\end{equation}

The limiting case $s=-1$ was treated by Korotyaev \cite{Kr2003}. 
The proof of statement~\eqref{eq_54.10} was completed in \cite{DjMt3}. 
Earlier \eqref{eq_54.10} was established by the authors \cite{MiMlmfat2009n1} 
under a stronger assumption $q\in H^{-1+}(\mathbb{T})$ and $s>-1$.

Furthermore, if $q\in H^{s}(\mathbb{T})$, $s\in [-1,\infty)$, 
then for the lengths of spectral gaps the following asymptotic formula hold \cite{Kr2003, MiMlmfat2009n1}:
\begin{align}
  \gamma_{q}(n) & =2|\widehat{q}(n)|+h^{-1}(n) & & \text{if}\quad 
  s=-1,\label{eq_54.12.0} \\
  \gamma_{q}(n) & =2|\widehat{q}(n)|+h^{1+2s-\delta}(n)\quad \forall\delta>0 & & \text{if}\quad 
  s\in(-1,0),\label{eq_54.12.1} \\
  \gamma_{q}(n) & =2|\widehat{q}(n)|+h^{1+s}(n) & & \text{if}\quad s\in[0,\infty).\hspace{100pt}\mbox{ }\label{eq_54.12.2}
\end{align}
Let us also recall that if $\omega_{1}\gg \omega_{2}$, i. e., $\omega_{1}(k)\gg \omega_{2}(k)$, 
$k\in \mathbb{Z}$, then
\begin{equation}\label{eq_54.14}
 H^{\omega_{1}}(\mathbb{T})\hookrightarrow H^{\omega_{2}}(\mathbb{T}),\qquad 
 h^{\omega_{1}}(\mathbb{N})\hookrightarrow h^{\omega_{2}}(\mathbb{N}).
\end{equation} 

Let $q\in H^{\omega}(\mathbb{T})$ and $\omega\in \mathrm{I}_{-1}$, 
then taking into account \eqref{eq_54.14} we have $q\in H^{s}(\mathbb{T})$, $s\in [-1,\infty)$, 
as $\omega(k)\gg |k|^{s}$. 
Taking into account that $\omega \in \mathrm{I}_{-1}$, together with \eqref{eq_54.14}, 
from \eqref{eq_54.12.0} -- \eqref{eq_54.12.2} we get:
\begin{equation*}
 \gamma_{q}(n)=2|\widehat{q}(n)|+h^{\omega}(n),
\end{equation*}
i. e., $\{\gamma_{q}(n)\}_{n\in \mathbb{N}}\in  h^{\omega}(\mathbb{N})$.

Now, let $\{\gamma_{q}(n)\}_{n\in \mathbb{N}}\in  h^{\omega}(\mathbb{N})$, then due to \eqref{eq_54.14} we have
$\{\gamma_{q}(n)\}_{n\in \mathbb{N}}\in  h^{s}(\mathbb{N})$, $s\in [-1,\infty)$, and, as consequence, from \eqref{eq_54.10} 
we get $q\in H^{s}(\mathbb{T})$, $s\in [-1,\infty)$, and the asymptotics \eqref{eq_54.12.0} -- \eqref{eq_54.12.2} hold. 
Taking into account that $\omega \in  \mathrm{I}_{-1}$ and \eqref{eq_54.14} we have:
\begin{equation*}
 \gamma_{q}(n)=2|\widehat{q}(n)|+h^{\omega}(n),
\end{equation*}
from where we get necessary result $q\in H^{\omega}(\mathbb{T})$.

The statement~\eqref{eq_54} for the case $\omega \in  \mathrm{I}_{-1}$ is completely proved.
\end{proof}

\begin{proof}[Proof of Theorem~\ref{th_12}]
From statement \eqref{eq_54} we get
\begin{equation}\label{eq_58}
  \gamma\left(H^{\omega}(\mathbb{T})\right) \subset h_{+}^{\omega}(\mathbb{N}).
\end{equation}
To establish the equality (i) of Theorem \ref{th_12} it is necessary to prove the inverse inclusion in formula 
\eqref{eq_58}. 
So, let $\{\gamma(n)\}_{n\in \mathbb{N}}$ be an arbitrary sequence in the space $h_{+}^{\omega}(\mathbb{N})$. Then 
$\{\gamma(n)\}_{n\in \mathbb{N}}\in h_{+}^{-1}(\mathbb{N})$. Due to \eqref{eq_56} a
potential $q\in H^{-1}(\mathbb{T})$ exists, such that the sequence $\{\gamma(n)\}_{n\in \mathbb{N}}\in 
h_{+}^{-1}(\mathbb{N})$ is its sequence of the lengths of spectral gaps. Since by assumption $\{\gamma(n)\}_{n\in 
\mathbb{N}}\in h_{+}^{\omega}(\mathbb{N})$ due to \eqref{eq_54} we conclude that $q\in H^{\omega}(\mathbb{T})$ and as 
consequence $\{\gamma(n)\}_{n\in \mathbb{N}}\in \gamma\left(H^{\omega}(\mathbb{T})\right)$. Therefore the inclusion
\begin{equation}\label{eq_60}
  \gamma\left(H^{\omega}(\mathbb{T})\right) \supset h_{+}^{\omega}(\mathbb{N})
\end{equation}
holds.

Inclusions \eqref{eq_58} and \eqref{eq_60} give the equality (i).

Now, let us prove the equality (ii) of Theorem \ref{th_12}. Let $q$ be an arbitrary function in the space 
$H^{\omega}(\mathbb{T})$. Then, due to statement 
\eqref{eq_54}, we have $\gamma_{q}=\{\gamma_{q}(n)\}_{n\in \mathbb{N}}\in h_{+}^{\omega}(\mathbb{N})$, i. e., $q\in 
\gamma^{-1}\left(h_{+}^{\omega}(\mathbb{N})\right)$. Therefore
\begin{equation}\label{eq_62}
  \gamma^{-1}\left(h_{+}^{\omega}(\mathbb{N})\right) \supset H^{\omega}(\mathbb{T}).
\end{equation}

Conversely, let $\{\gamma(n)\}_{n\in \mathbb{N}}$ be an arbitrary sequence from the space $h_{+}^{\omega}(\mathbb{N})$. Then 
due to \eqref{eq_56} we have $\gamma^{-1}\left(h_{+}^{-1}(\mathbb{N})\right)=H^{-1}(\mathbb{T})$, and therefore 
$\gamma^{-1}\left(\{\gamma(n)\}_{n\in \mathbb{N}}\right)\subset H^{-1}(\mathbb{T})$. Taking into account \eqref{eq_54} we 
conclude that $\gamma^{-1}\left(\{\gamma(n)\}_{n\in \mathbb{N}}\right)\subset H^{\omega}(\mathbb{T})$, that is
\begin{equation}\label{eq_64}
  \gamma^{-1}\left(h_{+}^{\omega}(\mathbb{N})\right) \subset H^{\omega}(\mathbb{T}).
\end{equation}

Inclusions \eqref{eq_62} and \eqref{eq_64} give the equality (ii) of Theorem \ref{th_12}.

The proof of Theorem \ref{th_12} is complete. 
\end{proof}

\vspace{25pt}

\textit{Acknowledgments.} This work was partially supported by Project No.~03-01-12/2 of National Academy of Science of 
Ukraine.


\end{document}